\documentclass[reqno,twocolumn]{amsart}

\usepackage{amsfonts}
\usepackage{url}
\usepackage{tikz}
\usetikzlibrary{shapes,arrows}
\usetikzlibrary{calc}
\usetikzlibrary{positioning}
\usepackage{minibox}
\usepackage{graphicx}
\usepackage{bm}
\usepackage{flushend}
\usepackage{yhmath}
\usepackage{fullpage}


\tikzstyle{block} = [draw, fill=blue!20, rectangle,
    minimum height=3em, minimum width=4em]
\tikzstyle{sum} = [draw, fill=blue!20, circle, node distance=1cm]
\tikzstyle{input} = [coordinate]
\tikzstyle{output} = [coordinate]

\newcommand\tint{{\textstyle\int\!}}

\newcommand{\liediff}{\mathbin{\triangle}}

\newtheorem{assertion}{Assertion}
\newtheorem{assumption}{Assumption}
\newtheorem{lemma}{Lemma}

\renewcommand{\mathsf}{}

\author{
Stephen Montgomery-Smith}
\address{Department of Mathematics\\
University of Missouri,
Columbia, MO 65211\\
U.S.A.
}

\title{Control of systems by parallel actuators}

\begin{document}


    \begin{abstract}
We describe a particular control method for a system controlled by several actuators with the same control constants.  We show under certain assumptions that the control constants for the whole system can be obtained immediately from the control constants for a single actuator.  This greatly simplifies the work in finding the control constants.  Also, no gain scheduling is required.  The authors have been unable to find any prior work in this direction, and so believe this is a rather new approach.
    \end{abstract}

\keywords{Parallel actuator, linear control theory, differential manifold, tangent bundle, cotangent bundle, wrench set.}

    \maketitle

\section{Introduction}

In this paper, we describe a system whose state, which we will call the \emph{end effector position}, is given by an element of a differential manifold $\eta \in \mathcal M$.  We suppose that the end effector position is determined by the \emph{values} of $n$ actuators:
\begin{equation}
\bm \ell = (\ell_1, \ell_2,\dots,\ell_n) \in \mathbb R^n .
\end{equation}
We write $\mathsf L:\mathcal M \to \mathbb R^n$ for the function which maps the end effector position to the actuator values.  Note that we can allow the problem to be over-constrained, that is, $n$ can be bigger or equal to the dimension of $\mathcal M$.  Note that the elements of $\mathcal M$ are generalized positions in the sense of the Euler-Lagrange formalism \cite{arnold}.

Examples of these are cable-driven parallel robots, consisting of a fixed rigid frame, and a floating rigid body called the end effector.  The end effector is manipulated via eight cables attached to actuators.  Each actuator is clamped to the fixed frame.  The value, $\ell_k$, of the $k$th actuator is the length of cable issued by the actuator.  The manifold $\mathcal M$ is the six dimensional space of poses, that is, positions with orientations.  See \cite{gallardo-alvarado,gogu,taghirad} for an introduction to parallel robots.  See \cite{pott,qian-et-al} for information about cable-driven parallel robots.  The algorithm described in this paper was tested upon a cable-driven parallel robot built by the NASA Johnson Space Center, which we describe in another paper \cite{montgomery-smith}.

We assume that the only measurements we can take are the values of the actuators, and that the only method of control is to command a force at each actuator, which converts into a force on the end effector.  We assume that once the end effector force is known, one can determine the trajectory of the end effector via a frictionless, quadratic Hamiltonian system.  The force is a generalized force obeying the principle of virtual work, and we will call it the \emph{end effector force}.

The method of control is to calculate the end effector position using the actuator values.  Then, from the difference of the actual end effector position from the requested end effector position, is calculated the required acceleration of the end effector.  From this is calculated the required end effector force.  Finally, we find the actuator forces to effect this.

Our main contribution is to show that the control constants for computing the required acceleration of the end effector are the same as the control constants used to control a single actuator, making it easier to determine the control method.

The benefit of this approach was demonstrated when developing the cable-driven parallel robot that is to be described in \cite{montgomery-smith}.  The control constants were calculated having access to and performing trials upon only one actuator.  When the controller was tried on the full parallel robot, it worked the first time.  If we had been forced to find the control constants by trial and error for the whole system, this would have been very difficult.

This result requires two things.  First that the actuator response is linear.  This would be violated if, for example, if there is significant non-linear friction.  Second, it requires that the each actuator by itself can be controlled by the same linear controller.  For example, with the cable-driven parallel robot, if there is significant flexing or stretching of cables, this would require a different control method for each actuator because the orientations and/or lengths of the cables aren't necessarily identical.

Since the control constants can be found by considering a controller for a single actuator, then the same control constants can be used irrespective of the end effector position, that is, no gain scheduling is required.  Thus the formula for resonant frequencies can also be calculated from the resonant frequencies of a single actuator.  

We believe the methods and results in Sections~\ref{sec parallel} and~\ref{sec theoretical}, where we show that the control constants come from those described in Section~\ref{control actuator}, are new.  For this reason, we know of no prior work in this direction, and thus we have no references.  But it is possible that these methods were developed in a completely different context, and that the author is simply unaware of them.  (Early versions of this paper were submitted to other control theory journals, and they replied that this paper was out of their purview, suggesting to the author that no-one else had considered this approach.)

\section{A brief primer on manifolds, tangent bundles, and cotangent bundles}

While there is a well known abstract definition of a differential manifold (see, for example \cite{lee}), for our purposes we will consider a \emph{manifold of dimension $m$} to be the inverse image of zero under an infinitely-differentiable function $F:\mathbb R^p \to \mathbb R^{m-p}$, whose Jacobian matrix is full rank in a neighborhood of $\mathcal M = F^{-1}(0)$.

Three examples of manifolds that are typically used in robotics are the manifold of positions $\mathbb R^3$, the manifold of orientations $SO_3$, and the manifold of poses $SE_3$, which by identifying the set of three by three matrices with $\mathbb R^9$, and the set of symmetric three by three matrices with $\mathbb R^6$, are respectively created by
\begin{align}
&\begin{aligned}
&F_1:\mathbb R^3 \to \mathbb R^0, \\ &F_1(x) = 0, \\ &\mathbb R^3 = F_1^{-1}(0); \\ &\
\end{aligned}\\
&\begin{aligned}
&F_2:\mathbb R^9 \to \mathbb R^7, \\ &F_2(R) = (R^T R - I, \det(R) - 1); \\ &SO_3 = F_2^{-1}(0); \\ &\
\end{aligned} \\
&\begin{aligned}
&F_3:\mathbb R^{12} \to \mathbb R^7, \\ &F_3(R,x) = (F_2(R), F_1(x)), \\ &SE_3 = F_3^{-1}(0).
\end{aligned}
\end{align}

To each point $\eta \in \mathcal M$ is associated its \emph{tangent space} $T_\eta \mathcal M$, that is, the $m$-dimensional subspace of $\mathbb R^p$ which is tangent to the manifold at $\eta$.  Thus $T_\eta \mathcal M$ can be thought of as the set of end effector velocities.  We call the disjoint union of all tangent spaces the \emph{tangent bundle} $T\mathcal M = \bigcup_{\eta \in \mathcal M} T_\eta \mathcal M$.

Also associated to each point $\eta \in \mathcal M$ is the dual space to the tangent space, which is called the \emph{cotangent space}, and is denoted $T^*_\eta\mathcal M$.  Thus $T^*_\eta\mathcal M$ can be thought of as the set of end effector forces that can be applied to the end effector, with the duality defined as mapping an end effector velocity and an end effector to their inner product, the rate of change of work done by the end effector.  The disjoint union of the cotangent spaces is called the \emph{cotangent bundle} $T\mathcal M = \bigcup_{\eta \in \mathcal M} T_\eta \mathcal M$.

In the first example $\mathbb R^3 = F_1^{-1}(0)$, the tangent space is the set of translational velocities, and the cotangent space is the set of forces.  In the second example $SO_3 = F_2^{-1}(0)$, the tangent space can be identified with the set of angular velocities, and the cotangent space with the set of torques.  In the third example $SE_3 = F_3^{-1}(0)$, the tangent space can be identified with the set of twists, and the cotangent space with the set of wrenches.

\section{Mathematical description of the system}

Given the position of the end effector that is a function of time, $\eta(t)$, we have its velocity and acceleration which are functions of time taking their values in $T\mathcal M$ given by
\begin{equation}
\varphi = \dot \eta, \quad
\alpha = \dot \varphi .
\end{equation}
We define the linear operator $\mathsf \Lambda_\eta:T_\eta\mathcal M \to \mathbb R^n$ by the directional derivative, which in local coordinates is
\begin{equation}
\mathsf \Lambda_\eta \theta = \theta \cdot \frac {d \mathsf L(\eta)}{d\eta} .
\end{equation}
Often we simply write $\Lambda$ for $\Lambda_\eta$ when there is no confusion.

Then, from the velocity $\varphi$, we can calculate the rate of change of the actuator values:
\begin{equation}
\label{dot l Lambda phi}
\dot{\bm\ell} = \Lambda_\eta \varphi.
\end{equation}

We suppose that there is a linear operator $\mathsf T = \mathsf T_\eta : \mathbb R^n \to T^*\mathcal M$, which converts actuator forces $\bm f = (f_1,f_2,\dots,f_n)$
to the end effector force $\tau$:
\begin{equation}
\tau = \mathsf T_\eta \bm f.
\end{equation}
By the principle of virtual work, we have
\begin{equation}
\varphi \cdot \mathsf T_\eta \bm f = \dot{\bm\ell} \cdot \bm f = \mathsf\Lambda_\eta \varphi \cdot \bm f.
\end{equation}
Hence
\begin{equation}
\mathsf T_\eta = \mathsf\Lambda_\eta^T .
\end{equation}

Next we describe the equations of motion.  Suppose that the system is given by a Lagrangian
\begin{equation}
l(\eta,\varphi) = \tfrac12 \mathsf M_\eta(\varphi,\varphi) - v(\eta),
\end{equation}
where $\mathsf M_\eta = \mathsf M$ is a positive definite bilinear operator on $T_\eta\mathcal M$.  This includes kinetic energy of the actuators
\begin{equation}
\label{kin no load}
\tfrac 12 m_0 |\dot{\bm\ell}|^2,
\end{equation}
where $m_0$ is the effective mass of the each actuator (the notion of `effective' is explained in Section~\ref{control actuator} below).  In local coordinates we can describe $\mathsf\Lambda$ and $\mathsf M$ as matrices, then the kinetic energy of the actuators is given by
\begin{equation}
\tfrac12 m_0 \varphi^T \mathsf \Lambda^T \mathsf \Lambda \varphi,
\end{equation}
and so we must have that the matrix
\begin{equation}
\mathsf M - m_0 \mathsf \Lambda^T \mathsf \Lambda
\end{equation}
is positive semi-definite.

Solving the Euler-Lagrange equations \cite{arnold}, we obtain the equations of motion
\begin{equation}
\tau = \mathsf M \alpha + \mu(\eta) ,
\end{equation}
where in local coordinates
\begin{equation}
\mu(\eta) = \varphi \cdot \frac {\partial \mathsf M_\eta}{\partial \eta} (\varphi,\cdot) - \frac{\partial}{\partial \eta} v(\eta) .
\end{equation}

We define the \emph{no-load forces} to be the actuator forces if the actuators are not attached to the system, that is, the Lagrangian is given simply by equation~\eqref{kin no load}:
\begin{equation}
\bm f_0 = m_0 \ddot{\bm\ell}.
\end{equation}
Thus differentiating equation~\eqref{dot l Lambda phi}, we obtain (in local coordinates)
\begin{equation}
\label{f_0}
\bm f_0 = m_0 \mathsf\Lambda_\eta \alpha + m_0 \varphi \cdot \frac{d\Lambda_\eta}{d\eta} \varphi .
\end{equation}
For the example of the cable-driven parallel robot, the cable tensions are given by
\begin{equation}
\text{cable tensions} = \bm f_0 - \bm f.
\end{equation}

Next, we need inverse functions to $\mathsf L$ and $\mathsf T$, which we call $\mathsf Y$ and $\mathsf F$.  We define the \emph{set of admissible actuator values}, $\mathcal L \subset \mathbb R^n$, to be the range of the function $\mathsf L$.  We suppose that we have a \emph{forward kinematics} function, $\mathsf Y : \mathcal L \to \mathcal M$, which is a left inverse to $\mathsf L$.  Because of possible measurement errors, $\mathsf Y$ should produce decent answers even if the actuator values are merely close to $\mathcal L$.  For example, this could be implemented using the Newton-Raphson Method.

For the inverse function of $\mathsf T$, we need some more definitions.  Given $ \bm f_b, \bm f_0 \in \mathbb R^n$, we suppose that we have a predefined set $\mathcal C_{\bm f_b, \bm f_0} \subset \mathbb R^n$.  Here $\bm f_b$ is the command force required to overcome actuator resistance such as back-EMF, $\bm f_0$ is the no-load actuator forces, and $\mathcal C_{\bm f_b, \bm f_0}$ is the set of those $\bm f$ such that it is permissible to command forces $\bm f + \bm f_b$ to the actuators.

Typically this is a convex set defined by a finite number of linear constraints.  For the example of cable-driven parallel robots, we might say $\bm f \in \mathcal C_{\bm f_b, \bm f_0}$ if and only if the tensions in the cables, $\bm f_0 - \bm f$, is never below a given predefined value, and the command forces $\pm(\bm f + \bm f_b)$ don't exceed the actuator hardware limits.

Then we define the \emph{wrench set} to be the set of achievable forces:
\begin{multline}
\mathcal W = \{ (\tau, \bm f_b, \bm f_0) \in T^*\mathcal M \times \mathbb R^n \times \mathbb R^n: \\ \exists \bm f \in \mathcal C_{\bm f_b, \bm f_0} \text{ such that } \mathsf T \bm f = \tau\} .
\end{multline}
We suppose that we have a function $\mathsf F: \mathcal W \times \mathbb R^n \times \mathbb R^n \to \mathbb R^n$ that provides a right inverse to the map defined by $\mathsf T$ in the following manner:
\begin{equation}
\mathsf T(\mathsf F(\tau, \bm f_b, \bm f_0)) = \tau,
\end{equation}
such that
\begin{equation}
\mathsf F(\tau, \bm f_b, \bm f_0) \in \mathcal C_{\bm f_b, \bm f_0}.
\end{equation}
Because there are more actuators than the number of degrees of freedom (that is, the dimension of $\mathcal M$, computing the actuator forces in the last step is an over-constrained problem.  For the example of cable-driven cable robots, there are many approaches in the literature \cite{gould-toint,gouttefarde-et-al,pott}.

Finally we need a way to approximate the difference between two end effector positions by an element of the tangent space.  That is, there is a function $\liediff: \mathcal M \times \mathcal M \to T \mathcal M$ such that $\liediff(\eta_1,\eta_2)$ is in $T_{\eta_1}\mathcal M$, so that with respect to a `reasonable' coordinate system about $\eta_1$ that
\begin{equation}
\label{lie diff approximation}
\eta_2 \approx \eta_1 + \liediff(\eta_1,\eta_2) .
\end{equation}
For example, if $\mathcal M$ is a Riemannian manifold, we could define it as the direction of a geodesic from $\eta_1$ to $\eta_2$.  If $\mathcal M$ is a Lie group, we could define it as the direction of a one parameter subgroup from $\eta_1$ to $\eta_2$.  With the example of $SO_3$, the latter can be identified with angle of rotation from one to the other, multiplied by the unit vector in the direction of the axis of this rotation.

\section{Control of a single actuator}
\label{control actuator}

Let $\ell$ denote actuator value, let $f_c$ be the command force given to a single actuator, and $f$ be the actual force supplied by this actuator.

Each actuator has an \emph{effective} no-load mass $m_0$, which is the ratio $f/\ddot \ell$ when there is no load placed upon the actuator.

For the remainder of this section, we suppose that the actuator is carrying a passive load.  We denote by $m$ to be the effective mass of the actuator with this load.  Thus no load corresponds to $m=m_0$, and we always have $m \ge m_0$.  If the actuator is clamped so that it cannot move, this corresponds to $m = \infty$, which is a mathematical idealization representing when the passive load is very large.

For the purpose of making the analogue of these equations and the system controller equations clearer, we shall replace the actuator actual and command forces by actual and command accelerations
\begin{gather}
a = \frac fm = \ddot \ell, \\
a_c = \frac {f_c}m .
\end{gather}

We look for a controller such that, given an actuator value $\ell_r$, attempts to create a command acceleration $a_c$ such that the actual actuator value, $\ell$, is close to $\ell_r$.

First we describe an \emph{open-loop} controller:
\begin{equation}
\label{pre approx open-loop}
f_c = m \ddot \ell_r + k_0 \dot \ell_r,
\end{equation}
or
\begin{equation}
\label{approx open-loop}
a_c = \ddot \ell_r + \frac{k_0}m \dot \ell_r.
\end{equation}
We call $k_0$ the back-EMF constant, since for electric motors this is a likely source of this term.  This controller fails badly if there is any drift or noise in the system, because it makes no attempt to correct for error.

Next, suppose we also have a good \emph{homogeneous closed-loop} controller.  Denote the vector containing all time derivatives of order less than $l$ of $\ell$ by
\begin{equation}\bm \ell = [\ell, \dot \ell, \ddot \ell, \dots, \ell^{(l-1)}]^T .
\end{equation}
The controller is defined by appropriately sized constant matrices $\mathsf A$, $\mathsf B$, $\mathsf C$, and $\mathsf D$ as:
\begin{align}
\label{hom closed-loop}
\dot {\bm x} &= \mathsf A \bm x + \mathsf B \ell \\
\label{hom closed-loop 2}
a_c &= \mathsf C \bm x + \mathsf D \bm \ell.
\end{align}
By a good homogeneous closed-loop controller, we mean that if this is used to control the passively loaded actuator, then $\ell$ converges to $0$ in a manner that is expeditious enough for our application.

An example is a PID controller
\begin{equation}
\label{pid hom closed-loop}
a_c = - (k_i \tint \ell + k_p \ell + k_d \dot \ell),
\end{equation}
But it could be something more complex, such as a cascaded controller, or a linear quadratic Gaussian controller.

Note that if cable stretching or sagging plays a significant role in the cable-driven parallel robot, then this should be able to control a single actuator with a cable with similar stretching or sagging characteristics attached.

The results of this paper don't depend upon what definition of `good' we use.  Our assertion is that the parallel actuator driven robot controller behaves as well as the single actuator controller.  Thus if the user knows the level of precision or stability required for the whole system, they merely have to check these same parameters for the single actuator.

We combine the open-loop and homogeneous closed-loop controller to obtain a good \emph{closed-loop feed-forward} controller, that is, given a requested actuator value $\ell_r$, and its vector of derivatives
\begin{equation}
\bm \ell_r = [\ell_r, \dot \ell_r, \ddot \ell_r, \dots, \ell_r^{(l-1)}]^T ,
\end{equation}
we find the command acceleration $a_c$ such that $\ell$ converges to $\ell_r$ in an expeditious manner.  This can be created by applying the homogeneous closed-loop controller to
\begin{gather}
\label{y_d}
\ell_d = \ell - \ell_r\\
\label{y_d b}
\bm \ell_d = \bm \ell - \bm \ell_r
\end{gather}
to obtain
\begin{align}
\label{closed-loop feed-forward}
\dot {\bm x} &= \mathsf A \bm x + \mathsf B \ell_d \\
\label{closed-loop feed-forward 2}
a_c &= \ddot \ell_r + \frac{k_0}m \dot \ell_r + \mathsf C \bm x + \mathsf D \bm \ell_d.
\end{align}
For example, with the PID controller it is
\begin{equation}
\label{pid closed-loop feed-forward}
a_c = \ddot \ell_r + \frac{k_0}m \dot \ell_r - (k_i \tint \ell_d + k_p \ell_d + k_d \dot \ell_d).
\end{equation}

\section{The controller for the system by parallel actuators}
\label{sec parallel}

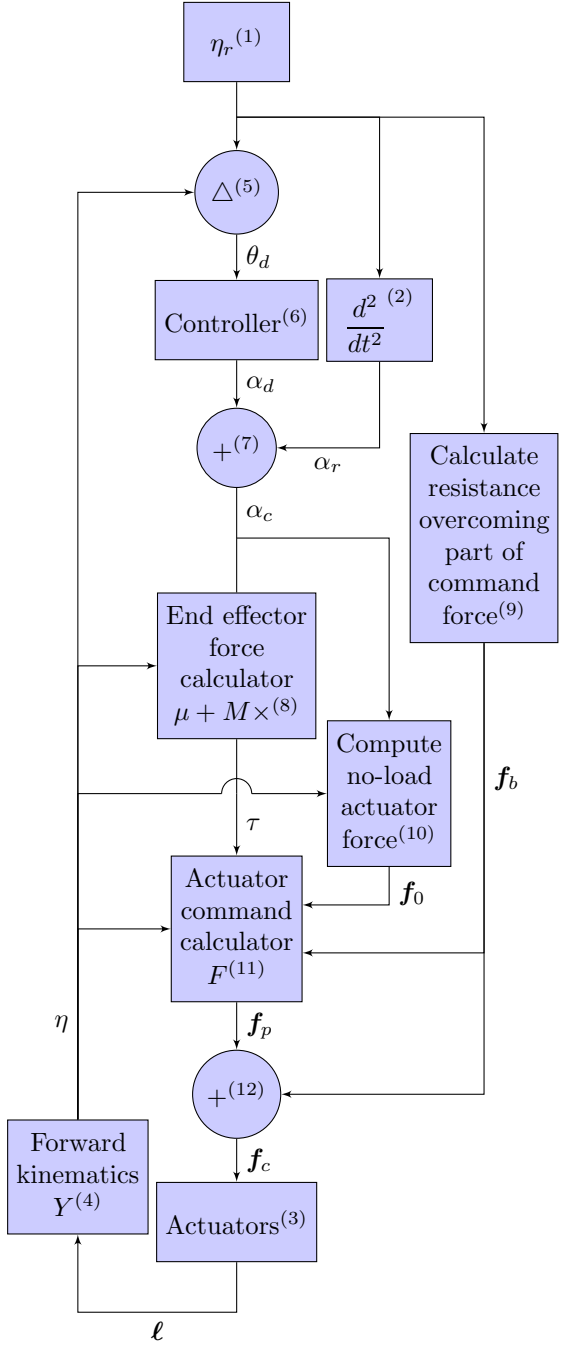
\begin{figure}

\begin{tikzpicture}[auto, node distance=2cm,>=latex']
    \node [block, name=eta_r] {$\eta_r$\textsuperscript{\eqref{first step alg}}};
    \node [input, name=input, below of=eta_r, node distance=1cm] {};
    \node [sum, below of=input, node distance=1cm] (approxliediff) {$\liediff$\textsuperscript{\eqref{liediff}}};
    \draw [->] (eta_r) -- (approxliediff);
    \node [block, below of=approxliediff, node distance=1.7cm] (controller) {Controller\textsuperscript{\eqref{cable control}}};
    \draw [->] (approxliediff) -- node[name=e]{$\theta_d$}(controller);
    \node [sum, below of=controller, node distance=1.7cm] (plus) {$+$\textsuperscript{\eqref{add requested acceleration to control}}};
    \draw [->] (controller) -- node{$\alpha_d$}(plus);
    \coordinate [below of=plus, node distance = 1.2cm] (tilde alpha c) {};
    \draw [-] (plus) -- node {$\alpha_c$} (tilde alpha c);
    \node [block, below of=tilde alpha c, node distance = 1.7cm] (force and torque calculator) {\minibox[c]{End effector\\force\\calculator\\$ \mu + \mathsf M \times$\textsuperscript{\eqref{determine torque-force}}}};
    \coordinate [below of=force and torque calculator, node distance = 1.7cm] (intersection);
    \draw [-] (tilde alpha c) -- (force and torque calculator);
    \node [block, below of=intersection, node distance = 1.8cm] (actuator calculator) {\minibox[c]{Actuator\\command\\calculator\\$\mathsf F$\textsuperscript{\eqref{determine actuator forces}}}};
    \draw [-] (force and torque calculator) -- (intersection);
    \draw [->] (intersection) -- node {$\tau$}(actuator calculator);
    \node [sum, below of=actuator calculator, node distance=2.2cm] (sum forces) {$+$\textsuperscript{\eqref{command cables}}};
    \draw [->] (actuator calculator) -- node{$\bm f_p$} (sum forces);
    \node [block, below of=sum forces, node distance = 1.7cm] (actuators) {Actuators\textsuperscript{\eqref{measure}}};
    \draw [->] (sum forces) -- node{$\bm f_c$} (actuators);

    \node [block, above left=-0.7cm and 0.1cm of actuators] (forward kinematics) {\minibox[c]{Forward\\kinematics\\$\mathsf Y$\textsuperscript{\eqref{forward kinematics}}}};
    \draw [->] (actuators) -- ($(actuators)+(0,-1.2)$)
                           -| node[pos=0.25] {$\bm\ell$} (forward kinematics);

    \node [block, right=0.1cm of controller] (d2/dt2) {\minibox[c]{$\dfrac{d^2}{dt^2}^{\eqref{second derivative}}$}};
    \draw [->] (input) -| (d2/dt2);
    \draw [->] (d2/dt2) |- node[pos=0.75]{$\alpha_r$} (plus);

    \node [block, right=1.2cm of intersection] (no-load force){\minibox[c]{Compute\\no-load\\actuator\\force\textsuperscript{\eqref{calculate no-load actuator forces}}}};
	\draw [->] (tilde alpha c) -| (no-load force);
    \draw [->] (no-load force) |- node[pos=0.35]{$\bm f_0$} (actuator calculator.20);

    \draw [->] (forward kinematics) |- (approxliediff);
    \draw [->] (forward kinematics) |- (force and torque calculator);
    \draw [->] (forward kinematics) |- node [pos=0.25]{$\eta$} (actuator calculator);
    \draw [-] (forward kinematics) |- ($(intersection)-(0.2,0)$);
    \draw [->] ($(intersection)+(0.2,0)$) -- (no-load force);
    \draw ($(intersection)-(0.2,0)$) arc (180:0:0.2);

    \node [block, right=2.3cm of tilde alpha c] (rate of change of actuator value) {\minibox[c]{Calculate\\resistance\\overcoming\\part of\\command\\force\textsuperscript{\eqref{back-emf}}}};
    \draw [->] (input) -| (rate of change of actuator value);
    \draw [->] (rate of change of actuator value) |-  node[pos=0.15] {$\bm f_b$} (sum forces);
    \draw [->] (rate of change of actuator value) |- (actuator calculator.-20);
\end{tikzpicture}

\caption{Block diagram of the controller for the robot driven by parallel actuators.}
\label{controller for cable-driven parallel robot}
\end{figure}

For the controller, we introduce the state vector $\bm \xi$, which is a vector of elements from the tangent space, with the same number of components as the state vector $\bm x$ described in equation~\eqref{hom closed-loop}.  We denote a matrix multiplied by a vector of elements from the tangent space as giving another vector of elements of the tangent space as follows:
\begin{equation}
(\mathsf A \bm \xi)_i := \sum_j \mathsf A_{i,j} \xi_j,
\end{equation}
where $\xi_j$ means the $j$th component of $\bm\xi$.

The control loop is shown as a block diagram in Figure~\ref{controller for cable-driven parallel robot}.  It is labeled throughout with superscript numbers that correspond to the steps given below.
\begin{enumerate}
\item \label{first step alg} Obtain the requested end effector position $\eta_r$.
\item \label{second derivative} Compute the requested acceleration
\begin{equation}
\alpha_r = \ddot\eta_r .
\end{equation}
\item \label{measure} Measure actuator values $\bm\ell$.
\item \label{forward kinematics} Calculate the actual end effector position:
\begin{equation}
\eta = \mathsf Y(\bm\ell).
\end{equation}
\item \label{liediff}
Find the $\liediff$ difference between the actual end effector position and the requested end effector position:
\begin{equation}
\label{theta_d from liediff}
\theta_d = \liediff(\eta, \eta_r)
\end{equation}
and compute the vector of derivatives
\begin{equation}
\bm \theta_d = [\theta_d, \dot \theta_d, \ddot \theta_d, \dots, \theta_d^{(l-1)}]^T .
\end{equation}
\item \label{cable control} Calculate the control part of the command end effector acceleration:
\begin{align}
\label{closed-loop feed-forward robot alpha}
\dot{\bm \xi} &= \mathsf A \bm \xi + \mathsf B \theta_d \\
\label{closed-loop feed-forward robot alpha 2}
\alpha_d &= \mathsf C \bm \xi + \mathsf D \bm \theta_d.
\end{align}
For example, the PID controller would be:
\begin{equation}
\alpha_d = - (k_i \tint \theta_d + k_p \theta_d + k_d \dot \theta_d).
\end{equation}
\item \label{add requested acceleration to control} Calculate the command end effector acceleration:
\begin{equation}
\alpha_c = \alpha_r + \alpha_d.
\end{equation}
\item \label{determine torque-force} Determine the command end effector force to be applied to the end effector:
\begin{equation}
\tau_c = \bm \mu + \mathsf M \alpha_c.
\end{equation}
\item \label{back-emf} Calculate the requested rate of change of the lengths of the cables
\begin{equation}
\dot{\bm \ell}_r = \mathsf\Lambda(\eta_r) \varphi_r,
\end{equation}
and find the resistance overcoming part of the command force for the actuators
\begin{equation}
\label{f_b}
\bm f_b = k_0 \dot{\bm\ell}_r .
\end{equation}
\item \label{calculate no-load actuator forces} Use equation~\eqref{f_0} to calculate the no-load actuator forces.
\begin{equation}
\bm f_0 = m_0 \mathsf\Lambda_\eta \alpha_c + m_0 \varphi \cdot \frac{d\Lambda_\eta}{d\eta} \varphi .
\end{equation}
\item \label{determine actuator forces} Determine whether $(\tau_c,\bm f_b,\bm f_0) \in \mathcal W$.  If it isn't, declare that the end effector is out of its workspace, command the actuators to brake, and quit.  Otherwise, calculate part of the actuator forces using
\begin{equation}
\label{f_p}
\bm f_p = \mathsf F(\tau_c, \bm f_b,\bm f_0) .
\end{equation}
\item \label{command cables} Command the actuators with
\begin{equation}
\label{f_c}
\bm f_c = \bm f_p + \bm f_b .
\end{equation}
\item Go back to Step~\ref{first step alg}.
\end{enumerate}
Note that in Steps~\ref{determine torque-force}, \ref{calculate no-load actuator forces}, and \ref{determine actuator forces}, the quantities $\mathsf M$, $\boldsymbol\mu$, $\mathsf\Lambda$, $\mathcal W$, and $\mathsf F$ are calculated from $\eta$ and $\varphi$, but they could just as well be calculated from $\eta_r$ and $\varphi_r$, the only change required being to equation~\eqref{theta_d from liediff} to
\begin{equation}
\theta_d = - \liediff(\eta_r, \eta)
\end{equation}

\section{Theoretical justification for the controller}
\label{sec theoretical}

\begin{assumption}
\label{assume linear}
The interaction between the command force $f_c$, the actual force $f$, and the actuator value $\ell$ of a single actuator, is given by the linear system
\begin{multline}
\label{system}
f + c_2 \dot f + \cdots + c_n f^{(n-1)} \\
+ k_0 \dot \ell + k_1 \ddot \ell + \cdots + k_n \ell^{(n+1)} \\
= f_c + \tilde c_2 \dot f_c + \cdots + \tilde c_p f_c^{(p-1)}.
\end{multline}
\end{assumption}

Note this linearity can be difficult to achieve if friction is significant in the actuators.  Friction is highly non-linear, especially when $\dot \ell$ and $f$ switch between having the same sign and having different signs, as could happen with an active load.

If the actuator has a passive load $m$ as described in Section~\ref{control actuator}, then equation~\eqref{system} becomes
\begin{multline}
\label{system passive load}
\ddot \ell + c_2 \ell^{(3)} + \cdots + c_n \ell^{(n+1)} \\
+ \tfrac{k_0}m \dot \ell + \tfrac{k_1}m \ddot \ell + \cdots + \tfrac{k_n}m \ell^{(n+1)} \\
= a_c + \tilde c_2 \dot a_c + \cdots + \tilde c_p a_c^{(p-1)}.
\end{multline}
The original open-loop controller was derived from the assumption that equation~\eqref{system} is a perfect description of the passively loaded actuator:
\begin{multline}
f_c + \tilde c_2 \dot f_c + \cdots + \tilde c_p f_c^{(p-1)} \\
= m(\ddot \ell_r + c_2 \ell_r^{(3)} + \cdots + c_n \ell_r^{(n+1)}) \\
+ k_0 \dot \ell_r + k_1 \ddot \ell_r + \cdots + k_n \ell_r^{(n+1)} ,
\end{multline}
which in Section~\ref{control actuator} was approximated with equation~\eqref{pre approx open-loop}.

\begin{assumption}
\label{assume closed-loop feed-forward}
For every $m \in [m_0, \infty]$, equations~\eqref{y_d}, \eqref{y_d b}, \eqref{closed-loop feed-forward} and~\eqref{closed-loop feed-forward 2} provide a good closed-loop feed-forward controller for system~\eqref{system passive load}.
\end{assumption}

This assumption can either be tested theoretically using eigenvalue analysis, or experimentally by loading various passive loads onto a single actuator.  The latter approach doesn't require any knowledge of the coefficients in equation~\eqref{system}.  We simply need to believe that such an equation exists.

\begin{assumption} \label{time scale} The time scale of the corrections $\theta_d$ is much smaller than the time change of $\eta_r$, and the magnitude of $\theta_d$ and its derivatives are much smaller than that of $\eta$ and its corresponding derivatives.  This means we can assume that the time derivatives of $\eta$ are negligible compared to $\eta$, and hence we can assume the matrices $\mathsf M$, $\mathsf F$, and the covector $\mu$, and their derivatives, are constant in the time scales in which the controller operates.  We also assume that equation~\eqref{lie diff approximation} holds for time derivatives of both sides, and that the constants are uniformly controlled in the ranges achieved.
\end{assumption}

The main result of this paper, which we state below, is described as an `assertion' rather than a `theorem,' as the proofs are not very rigorous.

\begin{assertion}
\label{main}
Given Assumptions~\ref{assume linear}, \ref{assume closed-loop feed-forward}, and~\ref{time scale}, the algorithm described in Section~\ref{sec parallel} is a good controller for the parallel actuator driven robot.
\end{assertion}

Pick a time $t_0$ which is in the range of times in which the controller performs the required corrections.  Let $\eta_0 = \eta(t_0)$.  Define
\begin{gather}
\theta = \liediff(\eta, \eta_0) \\
\theta_r = \liediff(\eta_r, \eta_0) .
\end{gather}
Thus
\begin{gather}
\label{approx eta}
\eta \approx \eta_0 + \theta \\
\label{approx eta r}
\eta_r \approx \eta_0 + \theta_r ,
\end{gather}
and if we set
\begin{equation}
\varphi_r = \dot \eta
\end{equation}
then
\begin{gather}
\label{varphi}
\varphi \approx \dot \theta \\
\label{varphi_r}
\varphi_r \approx \dot \theta_r ,
\end{gather}
and we have
\begin{equation}
\label{theta d theta theta r}
\theta_d \approx \theta - \theta_r.
\end{equation}

Let $n_{\mathcal M}$ be the dimension of $\mathcal M$, and define the $(n_{\mathcal M} \times n_{\mathcal M})$ matrix
\begin{equation}
\mathsf N = \mathsf M^{-1} \mathsf\Lambda^T \mathsf\Lambda.
\end{equation}

\begin{assertion}
\label{pre main}
With the same assumptions as Assertion~\ref{main}, if the end effector is controlled by the algorithm given in Section~\ref{sec parallel}, then we have
\begin{multline}
\label{system passive load alpha}
\ddot\theta + c_2 \theta^{(3)} + \cdots + c_n \theta^{(n+1)} \\
+ \mathsf N (k_0 \dot\theta + k_1 \ddot\theta + \cdots + k_n \theta^{(n+1)}) \\
\approx \alpha_c  + \tilde c_2 \dot \alpha_c + \cdots + \tilde c_p \alpha_c^{(p-1)}
\end{multline}
where
\begin{equation}
\alpha_c \approx \hat\alpha_c + k_0 \mathsf N \dot\theta_r
\end{equation}
and $\hat\alpha_c$ is calculated thus:
\begin{equation}
\bm \theta_d = [\theta_d, \dot \theta_d, \ddot \theta_d, \dots, \theta_d^{(l-1)}]^T
\end{equation}
\begin{align}
\dot {\bm \xi} &= \mathsf A \bm \xi + \mathsf B \theta_d \\
\hat\alpha_c &= \alpha_r + \mathsf C \bm \xi + \mathsf D \bm \theta_d.
\end{align}
\end{assertion}

\bigskip\noindent {\em Proof:} \ Rewrite equation~\eqref{system} for the $j$th actuator:
\begin{multline}
\label{system for jth}
f_j + c_2 \dot f_j + \cdots + c_n f_j^{(n-1)} \\
+ k_0 \dot \ell_j + k_1 \ddot \ell_j + \cdots + k_n \ell_j^{(n+1)} \\
= f_{c,j} + \tilde c_2 \dot f_{c,j} + \cdots + \tilde c_p f_{c,j}^{(p-1)}.
\end{multline}
From equations~\eqref{dot l Lambda phi}, \eqref{f_b}, \eqref{f_p}, \eqref{f_c}, and~\eqref{varphi_r}, we obtain
\begin{equation}
f_{c,j} = \mathsf F_j(\tau_c, \bar{\bm f}_c) + k_0 \Lambda \dot\theta,
\end{equation}
and applying $\mathsf T = \mathsf \Lambda^T$ we obtain
\begin{equation}
\mathsf T \bm f_c = \tau_c + k_0 \mathsf \Lambda^T \mathsf \Lambda \dot\theta_r .
\end{equation}
Similarly, from equations~\eqref{dot l Lambda phi} and~\eqref{varphi}, we have
\begin{equation}
\mathsf T \bm f = \tau,
\end{equation}
where $\tau$ is the actual end effector force, and
\begin{equation}
\mathsf T \dot{\bm \ell} = \mathsf \Lambda^T \mathsf \Lambda \dot\theta .
\end{equation}
Hence from Assumption~\ref{time scale}, we obtain
\begin{multline}
\tau + c_2 \dot \tau + \cdots + c_n \tau^{(n-1)} \\
+ \mathsf\Lambda^T \mathsf\Lambda (k_0 \dot\theta + k_1 \ddot\theta + \cdots + k_n \theta^{(n+1)}) \\
\approx  \tau_c + k_0 \mathsf\Lambda^T \mathsf\Lambda \dot\theta_r
+ \tilde c_2 (\dot\tau_c + k_0 \mathsf\Lambda^T \mathsf\Lambda \ddot\theta_r)
+ \\
\cdots + \tilde c_p (\tau_c^{(p-1)} + k_0 \mathsf\Lambda^T \mathsf\Lambda \theta_r^{(p)}) .
\end{multline}
Next, subtracting $\mu$, and then left multiplying by $\mathsf M^{-1}$, and using Assumption~\ref{time scale} again, we obtain equation~\eqref{system passive load alpha}.  The rest of the assertion follows by equation~\eqref{theta d theta theta r}.\hfill Q.E.D.

\begin{lemma}
\label{eig}
The matrix $\mathsf N$ has a basis of eigenvectors, with eigenvalues in $[0,m_0^{-1}]$.
\end{lemma}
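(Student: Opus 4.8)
The plan is to exploit the fact that $\mathsf N$ is the product of the inverse of a positive definite matrix with a positive semi-definite matrix, so that although $\mathsf N$ need not itself be symmetric, it is \emph{similar} to a symmetric matrix. Since $\mathsf M$ is symmetric and positive definite, it has a unique positive definite symmetric square root $\mathsf M^{1/2}$, with symmetric inverse $\mathsf M^{-1/2}$. I would first form the conjugate
\begin{equation}
\mathsf P = \mathsf M^{1/2}\,\mathsf N\,\mathsf M^{-1/2} = \mathsf M^{-1/2}\,\mathsf\Lambda^T\mathsf\Lambda\,\mathsf M^{-1/2},
\end{equation}
using $\mathsf M^{1/2}\mathsf M^{-1} = \mathsf M^{-1/2}$. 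Because $\mathsf\Lambda^T\mathsf\Lambda$ and $\mathsf M^{-1/2}$ are both symmetric, $\mathsf P$ is a real symmetric matrix.

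Next I would apply the spectral theorem to $\mathsf P$: it admits an orthonormal basis of eigenvectors and has real eigenvalues. Since $\mathsf N = \mathsf M^{-1/2}\mathsf P\,\mathsf M^{1/2}$ is similar to $\mathsf P$, it has exactly the same eigenvalues, and whenever $u$ is an eigenvector of $\mathsf P$ the vector $\mathsf M^{-1/2} u$ is an eigenvector of $\mathsf N$ with the same eigenvalue. As $\mathsf M^{-1/2}$ is invertible, these images form a basis of the tangent space, which establishes the claimed basis of eigenvectors.

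It then remains to locate the eigenvalues, which coincide with those of $\mathsf P$, inside $[0, m_0^{-1}]$. For the lower bound, $\mathsf P$ is a congruence of the form $\mathsf M^{-1/2}(\cdot)\mathsf M^{-1/2}$ applied to the positive semi-definite matrix $\mathsf\Lambda^T\mathsf\Lambda$, hence $\mathsf P$ is positive semi-definite and every eigenvalue is at least $0$. For the upper bound I would invoke the hypothesis, recorded in the mathematical description of the system, that $\mathsf M - m_0\mathsf\Lambda^T\mathsf\Lambda$ is positive semi-definite. Conjugating by $\mathsf M^{-1/2}$ gives
\begin{equation}
\mathsf M^{-1/2}(\mathsf M - m_0\mathsf\Lambda^T\mathsf\Lambda)\mathsf M^{-1/2} = \mathsf I - m_0\mathsf P,
\end{equation}
which is therefore positive semi-definite, so $m_0 \mathsf P \preceq \mathsf I$ and every eigenvalue of $\mathsf P$ is at most $m_0^{-1}$.

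This is essentially standard linear algebra, so I do not expect a serious obstacle. The one point requiring genuine care is that diagonalizability of $\mathsf N$ does not follow merely from its two symmetric factors, and must instead be deduced through the similarity to the honestly symmetric matrix $\mathsf P$; the only substantive use of the problem's structure is the upper bound, which rests entirely on the positive semi-definiteness of $\mathsf M - m_0\mathsf\Lambda^T\mathsf\Lambda$.
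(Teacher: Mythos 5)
Your proposal is correct and follows essentially the same route as the paper: your matrix $\mathsf P = \mathsf M^{-1/2}\mathsf\Lambda^T\mathsf\Lambda\,\mathsf M^{-1/2}$ is exactly the paper's $\tilde{\mathsf N}$, and both arguments diagonalize it by the spectral theorem, transfer the eigenbasis to $\mathsf N$ via the similarity $\mathsf N = \mathsf M^{-1/2}\tilde{\mathsf N}\mathsf M^{1/2}$, and obtain the upper bound from the positive semi-definiteness of $\mathsf M - m_0\mathsf\Lambda^T\mathsf\Lambda$ conjugated by $\mathsf M^{-1/2}$. Your write-up is in fact slightly more careful than the paper's on the one genuinely non-trivial point, namely that diagonalizability must pass through the similarity to the symmetric matrix rather than being read off from the two symmetric factors.
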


\bigskip\noindent {\em Proof:} \  Let
\begin{equation}
\tilde{\mathsf N} = \mathsf M^{-1/2} \mathsf \Lambda^T \mathsf \Lambda \mathsf M^{-1/2} .
\end{equation}
Clearly $\tilde{\mathsf N}$ is symmetric and positive semi-definite, and since $\mathsf M - m_0 \mathsf \Lambda^T \mathsf \Lambda$ is positive definite, we have
\begin{equation}
m_0^{-1} \mathsf I - \tilde{\mathsf N} = m_0^{-1} \mathsf M^{-1/2} (\mathsf M - m_0 \mathsf \Lambda^T \Lambda) \mathsf M^{-1/2}
\end{equation}
is positive definite.  (Here $\mathsf I$ denotes the $(n_{\mathcal M}\times n_{\mathcal M})$ identity matrix.)  Hence $\tilde{\mathsf N}$ has a basis of eigenvectors, with eigenvalues in $[0,m_0^{-1}]$.  Also,
\begin{equation}
\mathsf N = \mathsf M^{-1/2} \tilde{\mathsf N} \mathsf M^{1/2} ,
\end{equation}
and hence $\mathsf N$ and $\tilde{\mathsf N}$ are similar matrices.
\hfill Q.E.D.

\smallskip

\bigskip\noindent {\em Proof of Assertion~\ref{main}:} \  We use Lemma~\ref{eig} to obtain 
$\sigma_1$, $\sigma_2,\dots,\sigma_{n_{\mathcal M}}$ a basis of eigenvectors of $\mathsf N$, with corresponding eigenvalues
$m_1^{-1}$, $m_2^{-1}, \dots, m_{n_{\mathcal M}}^{-1}$, where $m_1$, $m_2, \dots, m_{n_{\mathcal M}} \in [m_0,\infty]$.  We also form a dual basis $\pi_1$, $\pi_2,\dots,\pi_{n_{\mathcal M}}$ that satisfies
\begin{equation}
\sigma_i \cdot \pi_j = \begin{cases} 1 & \text{if $i=j$} \\ 0 & \text{if $i \ne j$,}\end{cases}
\end{equation}
so that for any $\psi \in \mathbb R^{n_{\mathcal M}}$ we have
\begin{gather}
\label{mu nu}
\psi = \sum_{i=1}^{n_{\mathcal M}} (\psi \cdot \pi_i) \sigma_i \\
\pi_i \cdot \mathsf N \beta = m_i^{-1} (\pi_i \cdot \beta) .
\end{gather}
By Assumption~\ref{time scale}, we can assume that all these vectors and eigenvalues are constant.  For each $1 \le i \le n_{\mathcal M}$, dot product the equations in Assertion~\ref{pre main} by $\pi_i$.  Then it may be seen that the resulting equations satisfy the hypotheses of Assumption~\ref{assume closed-loop feed-forward}, with $y$ replaced by $\theta \cdot \pi_i$, with $y_r$ replaced by $\theta_r \cdot \pi_i$, and with $m$ replaced by $m_i$.  Thus $\theta \cdot \pi_i$ is well controlled by $\theta_r \cdot \pi_i$.

Then it follows by equation~\eqref{mu nu} that $\theta$ is well controlled by $\theta_r$.  Therefore by equations~\eqref{approx eta} and~\eqref{approx eta r}, we have that $\eta$ is well controlled by $\eta_r$.
\hfill Q.E.D.

\section{Conclusions}

We have shown, under certain assumptions, that the control constants for a system of several parallel actuators can be derived simply from knowledge of the control constants of a single actuator.  The benefit of this is that finding the control constants for a single actuator is much easier than finding the control constants for the whole system.  Also, we can show that no gain scheduling is required, that is, the control constants do not require adjusting depending upon where the end effector lies.

The assumptions are (1) that each actuator has a linear response to commands, (2) that the time and distance scales of the control changes are much smaller than the respective scales of the requested position, and (3) that the control constants work well no matter what passive load is placed upon each single actuator.  Also, (4) there is also the implicit assumption that the same control constants work for each of the actuators.

The first assumption seems to be very important, in particular, our limited experience is that this approach does not work well if the effect of friction is large.  The fourth assumption means that this approach can be flawed if there are other non-proportional links in the mechanism between the actuator and end effector, for example, if there is a stretchable cable between them, and the lengths of the cables are different for different actuators.

Even with these limitations, we do believe that this approach is definitely of theoretical interest.  And perhaps future work can overcome some of these difficulties.

\end{document}